\newif\iffull
\newcommand{\leaveout}[1]{}
\date{}
\title{\hspace*{-5mm}A note on 1-planar graphs with minimum degree 7}
\author{Therese Biedl
\thanks{Supported by NSERC.}
}
\institute{David R.~Cheriton School of Computer
Science, University of Waterloo, Waterloo, Ontario N2L 1A2, Canada.
\email{biedl@uwaterloo.ca}
}
\begin{document}

\maketitle
\begin{abstract}
It is well-known that 1-planar graphs have minimum degree at most 7,
and not hard to see that some 1-planar graphs have minimum degree
exactly 7.  In this note we show that any such 1-planar graph has
at least 24 vertices, and this is tight.
\end{abstract}

\section{Introduction}

A {\em 1-planar} graph is a graph that can be drawn in the plane
such that every edge has at most one crossing.  (We refer the reader
to any graph theory textbook for standard notations around graphs.)
This graph class was introduced by Ringel \cite{Ringel1965}, because
they naturally arise when taking a planar graph $G$ and its dual graph
$G^*$ and connecting vertices/faces that are incident to each other.
The resulting 1-planar graph $H$ has a vertex of degree at most 6, because at
least one of $G$ and $G^*$ has a vertex $v$ of degree 3, and adding
the edges to obtain $H$ doubles the degrees.  

However, not all 1-planar graphs are obtained by combining a planar
graph and its dual; for example $K_6$ is 1-planar but not obtained
in this way.  A different way to argue this is to exhibit a 1-planar
graph that has no vertex of degree 6 or less.  Figure~\ref{fig:ex}
shows one such graph; it has 24 vertices.  
In this note, we argue that ``24'' is tight.    Many other structural
properties are known for 1-planar graphs (see for example \cite{BEG+13,FM07}),
and even some for 1-planar graphs of minimum degree 7 \cite{HM11},
but it seems that the size of
1-planar graphs with minimum degree 7 has not previously been studied.

\section{Lower bound}

\begin{figure}[ht]
\hspace*{\fill}
{\includegraphics[width=0.5\linewidth,page=2]{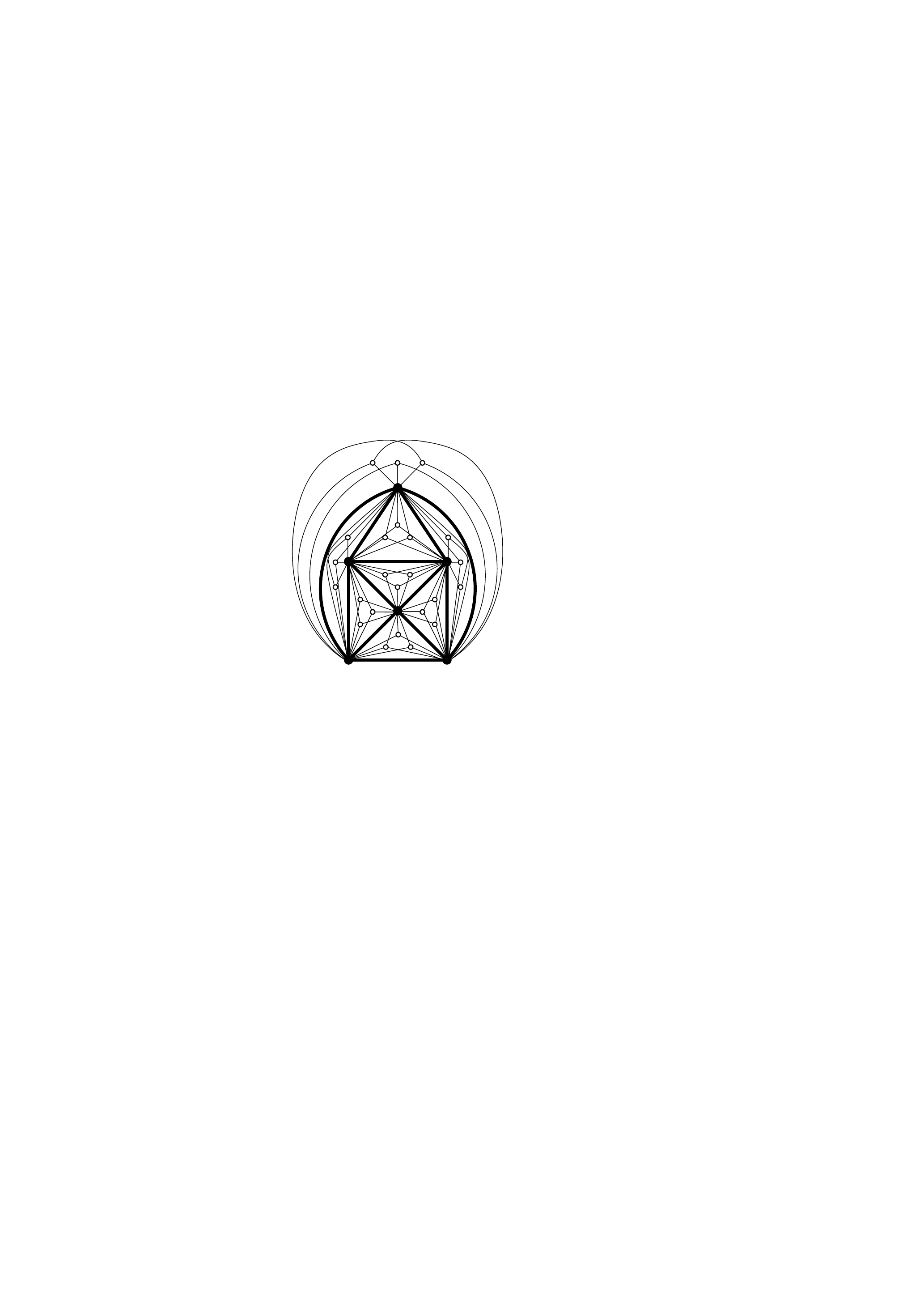}}
\hspace*{\fill}
\caption{A 1-planar graph with minimum degree 7.  
}
\label{fig:ex}
\end{figure}

Let $G$ be a simple 
1-planar graph with $n$ vertices, $m$ edges and minimum degree 7.
We assume throughout the following argument that one particular 1-planar
drawing $\Gamma$ (described by giving the clockwise order around each vertex and
the information which pairs of edges are crossing) has been fixed.  
We assume
that this drawing is {\em good} \cite{SchaeferBook}, which in particular means
that no edge crosses itself and no two edges with a common endpoint cross.

The given 1-planar drawing
defines the {\em regions}, which are the connected parts of $\mathbb{R}\setminus \Gamma$.
Each region $R$ contains on its boundary a number of vertices and crossings of $\Gamma$;
these are called the {\em corners} of $R$.  The {\em degree} of a region is the number
of corners that it has.  No region can have one or two corners, because
this would either mean a loop or a double edge in the graph, or the
drawing would not have been good.  

We call a 1-planar drawing {\em triangulated} if every region has
exactly three corners.  Note that the given 1-planar drawing $\Gamma$
can be made triangulated by inserting further edges as follows.
We already know that no region has fewer than three corners.
If some region $R$ has four or more corners, then we can find two
non-consecutive corners $v,w$ on $R$ that are both vertices, and 
add an edge between them.  Edge $(v,w)$ can be drawn inside $R$ without 
crossing, and either splits $R$ into two regions of smaller degree, or 
decreases the number of connected components of the boundary of $R$; 
either way we have made progress and can repeat until the resulting
drawing is triangulated and 1-planar.    
Note that this triangulated 1-planar drawing may not
represent a simple graph, but in what follows  simplicity will never
be used, only that the drawing is triangulated.

So fix from now on a triangulated 1-planar drawing $\Gamma$ where
all vertices have degree 7 or more.  Let $n$ and $m$ be the number
of edges in the graph that $\Gamma$ represents; note that $n\geq 3$
since the drawing is good and triangulated.
Let $n_7$ be the
number of vertices that have degree exactly 7.  Since all other vertices
have degree at least 8, we have
\begin{equation}
2m = \sum_{v\in V} \deg(v) \geq 7n_7 + 8(n-n_7) = 8n-n_7.
\end{equation}
It is well-known that every 1-planar simple graph with $n\geq 3$ vertices
has at most
$4n-8$ edges (see e.g.~\cite{FM07}), but we repeat the argument here because
our graphs are not necessarily simple (but have a triangulated drawing)
and because 
we need a more detailed bound.  Let $x$ be the number of crossings
of $\Gamma$.  Let $\Gamma^-$ be the planar drawing 
obtained by picking one edge at
each of these crossings and removing it.  Since $\Gamma$ was triangulated,
every crossing was surrounded by four regions that are triangles, and
removing one of the crossing edges hence gives two regions that are
triangles.  Therefore $\Gamma^-$ is a planar triangulated drawing.  It
may have multiple edges, but since all regions are triangles it nevertheless
follows from Euler's formula that $\Gamma^-$ has $3n-6$ edges.  Therefore
\begin{equation}
m = 3n-6+x.
\end{equation}
Let $t$ be the number of regions of $\Gamma$ that are {\em uncrossed}, i.e.,
for which none of the edges are crossed.  Put differently, $t$ counts
the number of regions where all three corners are vertices.  Returning
to the planar triangulated drawing $\Gamma^-$, observe that these $t$
regions of $\Gamma$ are also regions of $\Gamma^-$, and $\Gamma^-$ has
a further $2x$ regions that correspond to
the $x$ crossings of $\Gamma$.  This covers all regions of $\Gamma^-$.  By
Euler's formula the triangulated planar drawing $\Gamma^-$ has $2n-4$ regions,
and  therefore
\begin{equation}
2n-4 = 2x + t. 
\end{equation}
(In particularly therefore $m\leq 4n-8-\frac{t}{2}$ for any 1-planar graph with
a triangulated drawing.)

Consider a vertex $v$ of degree 7 in $\Gamma$.  Then no two edges $e_1,e_2$
that are incident to $v$ and consecutive in
the circular clockwise order at $v$ can be crossed, else the
region incident to $v$ and between $e_1$ and $e_2$ would not have
degree 3.  Since the degree of $v$ is odd, therefore there are two
consecutive uncrossed edges at $v$, say they 
are $(v,w_0)$ and $(v,w_1)$.  Then $\{v,w_0,v_1\}$ is an uncrossed
region since $\Gamma$ is triangulated.  So for every vertex of degree 7,
there exists an incident uncrossed region.  Vice versa, every
uncrossed region is incident to at most three vertices of degree 7,
and so we have
\begin{equation}
n_7\leq 3t.
\end{equation}
Now we put these equations together as follows:
$$
3n_7 
\stackrel{(1)}{\geq} 24n-6m
\stackrel{(2)}{\geq} 24n - (18n - 36 +6x)
\stackrel{(3)}{\geq} 6n+36 - (6n-12-3t) 
\stackrel{(4)}{\geq} 48 + n_7
$$
and the following holds.

\begin{theorem}
Any simple 1-planar graph with minimum degree 7 
has at least 24 vertices of degree 7.
\end{theorem}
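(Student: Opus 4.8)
The plan is to extract the bound directly from the chain of inequalities that has already been assembled, and then to carry the count back from the triangulated drawing $\Gamma$ to the original simple graph $G$. Reading the displayed chain from its first term to its last, it asserts $3n_7 \geq 48 + n_7$; subtracting $n_7$ and halving gives $n_7 \geq 24$. This is the entire arithmetic content of the argument, and it is immediate once equations (1)--(4) are in hand, so I would not expect any new estimate to be needed at this stage.

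The step that genuinely requires attention is that $n_7$ was defined for the triangulated drawing $\Gamma$, which need not be simple, whereas the theorem concerns $G$ itself. I would resolve this by noting that $\Gamma$ arises from a drawing of $G$ purely by \emph{inserting} edges, so no vertex ever loses an incidence and $\deg_\Gamma(v) \geq \deg_G(v)$ for every vertex $v$. Consequently, if $\deg_\Gamma(v) = 7$ then $7 = \deg_\Gamma(v) \geq \deg_G(v) \geq 7$, the last inequality being the minimum-degree hypothesis on $G$, which forces $\deg_G(v) = 7$. Hence every one of the at least $24$ vertices counted by $n_7$ is already a degree-$7$ vertex of $G$, and the theorem follows.

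I would flag this transfer as the main obstacle, not because it is difficult but because it is the only point at which the distinction between the simple graph $G$ and its possibly non-simple triangulation interacts with the degree count; everything else is the bookkeeping recorded in equations (1)--(4). Concretely, the one thing worth double-checking is that the triangulation operation never decreases a degree even in the case where adding an edge merges two boundary components of a region rather than splitting it, and this holds for the trivial reason that the operation still only adds a single edge. Had the chain not been supplied, the real crux would instead be the parity argument behind equation (4)---that a degree-$7$ vertex, having an odd number of incident edges, no two consecutive of which can be crossed, must bound an uncrossed triangular region---but with (1)--(4) assumed, the remainder is routine.
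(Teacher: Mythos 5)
Your proposal is correct and follows the paper's own route: the paper's proof of the theorem is exactly the displayed chain $3n_7 \geq 24n-6m \geq \cdots \geq 48+n_7$, from which $n_7\geq 24$ follows by the arithmetic you give. Your explicit transfer step---that a vertex of degree $7$ in the (possibly non-simple) triangulated drawing $\Gamma$ must already have degree exactly $7$ in $G$, because triangulating only inserts edges and $G$ has minimum degree $7$---is a detail the paper leaves implicit, and you fill it in correctly.
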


In particular, $G$ cannot have fewer than 24 vertices overall.
Figure~\ref{fig:ex} shows that this is tight.  We also note
that the result holds more broadly for any 1-planar graph with
minimum degree 7 that has a 1-planar drawing where all regions
have degree three or more, even if the graph has multiedges or
loops.

\section{Conclusion}

In this note, we argued that any simple 1-planar graph with minimum degree 7
has at least 24 vertices.  This bound is tight.
Our bound has some consequences for matching-bounds.  Wittnebel \cite{BW19}
showed that there is an infinite class of simple
1-planar graphs of minimum degree 7 
for which any matching $M$ has size at most $\frac{15n+16}{31}$.  He used
as ingredient a 1-planar graph with minimum degree 7 and 32 vertices.
Because the graph in Figure~\ref{fig:ex} has fewer vertices, this bound
can be improved.

\begin{lemma}
\label{lem:match}
For any $N$,
there exists a simple
1-planar graph with minimum degree 7 and $n\geq N$ vertices for which any matching
has size at most $\frac{11n+12}{23}$.
\end{lemma}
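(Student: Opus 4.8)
The plan is to build the family by a ``book'' construction. Fix the $24$-vertex graph $H$ of Figure~\ref{fig:ex}, pick one vertex $z$ of $H$, and for each $k\geq 1$ let $G_k$ be obtained by taking $k$ disjoint copies of $H$ and identifying the $k$ copies of $z$ into a single vertex, still called $z$. Since the copies are otherwise vertex-disjoint, $G_k$ is simple, has $n=24k-(k-1)=23k+1$ vertices, and has minimum degree $7$: every vertex other than $z$ keeps its degree $7$ from its copy of $H$, while $z$ has degree $7k\geq 7$. Choosing $k\geq (N-1)/23$ gives $n\geq N$, so it remains to bound the matchings of $G_k$ and to verify that $G_k$ is $1$-planar.

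For the matching bound I would use the Tutte--Berge formula $\nu(G)=\tfrac12\big(n-\max_{U\subseteq V}(o(G-U)-|U|)\big)$, where $o(\cdot)$ counts the components with an odd number of vertices and $\nu$ denotes the maximum matching size; for an upper bound it suffices to evaluate the bracket at a single set $U$. Take $U=\{z\}$. Then $G_k-z$ is the disjoint union of $k$ copies of $H-z$, and each copy has $23$ vertices, an odd number, so each copy contributes at least one odd component and $o(G_k-z)\geq k$. Hence $\nu(G_k)\leq \tfrac12\big(n-(k-1)\big)=\tfrac12(22k+2)=11k+1$, and substituting $k=(n-1)/23$ gives $\nu(G_k)\leq \frac{11n+12}{23}$, as required. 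Note that I do not even need $H-z$ to be connected: oddness of $23$ already forces at least one odd component per copy.

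The one real step is the $1$-planarity of $G_k$, and this is where I expect the work to lie. The idea is to start from a good $1$-planar drawing of $H$ in which $z$ lies on the outer region; such a drawing fits inside a closed disk with $z$ on its boundary and all seven edges at $z$ entering through a single angular wedge. One can then place $k$ scaled copies of this drawing into $k$ disjoint wedges around the common point $z$: within each wedge the drawing is the $1$-planar drawing of one copy of $H$, and no edge of one copy crosses an edge of another because distinct copies occupy disjoint regions of the plane, so $G_k$ inherits a good $1$-planar drawing. The obstacle is therefore not the counting, which is routine, but checking that the specific gadget of Figure~\ref{fig:ex} admits a $1$-planar drawing with a vertex on its outer boundary whose incident edges can be funneled into a wedge; this is a finite topological check on the drawing in the figure, and the natural candidate for $z$ is any vertex incident to the outer region of that drawing.
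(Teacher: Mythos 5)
Your proposal is correct and follows essentially the same route as the paper: both glue $k=(n-1)/23$ copies of the 24-vertex graph of Figure~\ref{fig:ex} at a single shared vertex and then apply the easy direction of the Tutte--Berge bound with that vertex as the deleted set, yielding $\frac{1}{2}\bigl(n-(k-1)\bigr)=\frac{11n+12}{23}$. Your two refinements---observing that oddness of $23$ makes connectivity of $H-z$ unnecessary, and sketching the wedge argument for $1$-planarity that the paper dismisses with ``clearly''---only make the same argument slightly more careful.
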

\begin{proof}
Let $n\geq N$ be such that $n=1\bmod 23$.  Create one vertex $v$, and split the
remaining vertices into $(n-1)/23$ groups of 23 vertices each.  For each group, insert
edges to turn these 23 vertices, plus vertex $v$, into the graph of Figure~\ref{fig:ex}.
Put differently, combine $(n-1)/23$ copies of this graph at one of their vertices.
Clearly the resulting graph is 1-planar, simple, and has minimum degree 7.  Observe that $G\setminus v$
has $(n-1)/23$ connected components, each of odd size.  The bound now follows from the ``easy''
direction of the Tutte-Berge theorem \cite{Berge1958}:  In any matching $M$, at most
one edge of $M$ connects $v$ to one component, so there are at least $\frac{n-1}{23}-1
= \frac{n-24}{23}$ vertices (one in each of the other components) that are unmatched.
Therefore any matching has size at most $\frac{1}{2}(n-\frac{n-24}{23}) = \frac{11n+12}{23}$.
\end{proof}

Because there are no smaller 1-planar graphs with minimum degree 7, there is
some hope for showing that the bound in Lemma~\ref{lem:match} is tight (i.e.,
there is a matching of size at least $\frac{11n+12}{23}$
in any 1-planar graph with minimum degree 7), though this remains an open problem.

\bibliographystyle{plain}
\bibliography{journal,full,gd,papers}

\end{document}